\DeclareSymbolFont{AMSb}{U}{msb}{m}{n}
\DeclareSymbolFontAlphabet{\Bb}{AMSb}
\newtheorem{theorem}{Theorem}[section]
\newtheorem{proposition}[theorem]{Proposition}
\theoremstyle{remark}
\newtheorem{definition}[theorem]{Definition}
\numberwithin{equation}{section}
\begin{document}

\title[Combinatorial relations among relations]
{Combinatorial relations among relations of $C_{2}\sp{(1)}$-standard modules for higher levels }
\author{Tomislav \v Siki\' c }
\address{Tomislav \v{S}iki\'{c}, University of Zagreb, Faculty of Electrical Engineering and Com- \mbox{\hskip 8.5em} puting, Unska 3, 10000 Zagreb,  Croatia}
\email{tomislav.sikic@fer.hr}

\subjclass[2000]{Primary 17B67; Secondary 17B69, 05A19.}

\begin{abstract}
For an affine Lie algebra $\hat{\mathfrak g}$ the coefficients of certain vertex operators which annihilate level $k$ standard $\hat{\mathfrak g}$-modules are the defining relations for level $k$ standard modules.  In the paper \cite{PS3} combinatorial structure of the leading terms of mentioned relations for level $k=2$ standard  $\hat{\mathfrak g}$-modules and affine Lie algebras of type $C_{n}\sp{(1)}$ is given. The main result of this paper is a construction of combinatorially parameterized relations among relations of $C_{2}\sp{(1)}$-standard modules for all levels.
\end{abstract}
\maketitle
\def\sq{{\lower.3ex\vbox{\hrule\hbox{\vrule height1.2ex depth1.2ex\kern2.4ex			\vrule}\hrule}\,}}
%%%%%%%%%%%%%%%%%%%%%%%%%%%%%%%
\section{Introduction}
Let $\mathfrak g$ be a simple Lie algebra of the type $C_n$ and $\hat{\mathfrak g}=\mathfrak g\otimes\mathbb C[t, t^{-1}]+\mathbb Cc$ the associated affine Lie algebra. 
Let $B$ be a weight basis of $\mathfrak g$ and $\bar B_{<0}=\{b(i)\mid b\in B, i\in\mathbb Z_{<0}\}$, where $b(i)=b\otimes t^i$. Denote by $\mathcal P$ the set of monomials
\begin{equation*}
	\pi=\prod_{b(j)\in \bar B}b(j)^{n_{b(j)}}\in S(\hat{\mathfrak g})
\end{equation*}
and by $\mathcal P_{<0}= \mathcal P\cap S(\hat{\mathfrak g}_{<0})$. Then we say that  $\pi\in\mathcal P_{<0}$ is a colored partition of length $\ell(\pi)$,  degree $|\pi|$ 
and support $\text{supp\,}\pi$,
$$
\quad\qquad\ell(\pi)=\sum_{b(j)\in \bar B}{n_{b(j)}}, \ \ 
|\pi|=\sum_{b(j)\in \bar B}{n_{b(j)}}\cdot j, \ \ 
\text{supp\,}\pi =\{b(j)\in\bar B_{<0}\mid n_{b(j)}>0\},
$$
with colored parts ${b(j)}\in\text{supp\,}\pi$ of degree $j<0$ and color $b\in B$, appearing in the partition $n_{b(j)}$ times.
Instead of the product $\mu\nu\in S(\hat{\mathfrak g}_{<0})$ sometimes we shall write $\mu\cup\nu\in {\mathcal P}_{<0}$ saying that $\mu\cup\nu$ is the partition having all the parts of $\mu$ and $\nu$ together. Likewise we write
$$
\rho\subset\pi\quad\text{if}\quad \pi=\rho\kappa\ (=  \rho\cup\kappa)
$$
for some $\kappa$, saying that $\rho\subset\pi$ is an {\it embedding of $\rho$ into $\pi$}.
%Monomials $\pi=\prod_{b(i)\in \bar B_{<0}}  b(i)^{m_{b(i)}}\in S(\hat{\mathfrak g})$ are interpreted as colored partitions.
 
In \cite{PS1}, a monomial basis of the basic $\hat{\mathfrak g}$-module $L(\Lambda_0)$ is constructed: for each $\pi$ one can choose the corresponding $u(\pi)\in U(\hat{\mathfrak g})$ and construct the PBW-spanning set of the basic module consisting of monomial vectors
\begin{equation}\label{E: PBW spanning}
	u(\pi)v_{\Lambda_0}\in L(\Lambda_0),
\end{equation}
where $v_{\Lambda_0}\in L(\Lambda_0)$ is the highest weight vector.
In order to reduce this spanning set to a basis of $L(\Lambda_0)$, one uses the set of relations $\bar R$ with the property
\begin{equation}\label{E: relations barR}
	L(\Lambda_0)=N(\Lambda_0)/\bar R N(\Lambda_0), \quad \bar R L(\Lambda_0)=\{0\},
\end{equation}
where $N(\Lambda_0)$ is the generalized Verma module with the structure of vertex operator algebra (see \cite{K}, \cite{LL}, \cite{MP1}). Moreover, the relations $r\in \bar R$ are the coefficients of certain vertex operators, which can be written 
(roughly speaking) as infinite combinations of monomials $u(\kappa)$ and can be parametrized as $r=r(\rho)$ by their leading term $\rho\in S(\hat{\mathfrak g}_{<0})$ (i.e. the smallest term $u(\rho)\in U(\hat{\mathfrak g})$ appearing in the sum $r$ with respect to some order on monomials $u(\kappa)$). The set of leading terms of relations $r\in\bar R\,\backslash\{0\}$ is denoted by $ \ell\text{\!\it t\,}(\bar R)\subset \mathcal P_{<0}$.
%$$
%\rho=\ell\text{\!\it t\,}(r(m)) \quad \text{if} \quad 
%r(m)=c_\rho u(\rho)+\sum_{\rho\prec \kappa}c_\kappa u(\kappa),\quad c_{\rho}\neq0.
%$$
So for every  $\pi$ that contains as a factor an embedded leading term $\rho\subset\pi$ of a relation $r(\rho)$ we can use the relation (\ref{E: relations barR}) and erase the vector $u(\pi)v_{\Lambda_0}$ from the spanning set (\ref{E: PBW spanning}). The reduced spanning set is a basis; it is parametrized by colored partitions $\pi$ which contain no leading term $\rho$ of relations in $\bar R$. 

In \cite{PS1}, linear independence of the reduced spanning set is proved by constructing a basis of the maximal $\hat{\mathfrak g}$-submodule
\begin{equation}\label{E: maximal submodule}
	N^1(\Lambda_0)=\bar R N(\Lambda_0)\subset N(\Lambda_0).
\end{equation}
Let $u(\rho\subset\pi)=r(\rho)u(\pi/\rho)$. Then the spanning set
\begin{equation}\label{E: spanning of maximal submodule}
	u(\rho\subset\pi)v_{\Lambda_0}\in N^1(\Lambda_0)
\end{equation}
is in \cite{PS1} reduced to a basis by proving the {\it relations among realations}, i.e. by showing that for any two embeddings $\rho_1\subset\pi$ and $\rho_2\subset\pi$ the difference
\begin{equation}\label{E: relations among relations}
	u(\rho_1\subset\pi) - u(\rho_2\subset\pi)
\end{equation}
is an infinite combination of  ``higher in order'' terms $u(\rho\subset\pi')$. The proof is reduced to a combinatorial statement that ``the number of needed relations among relations is equal to the number of constructed relations among relations by using the representation theory''.

We believe that it is possible to construct in this way the monomial bases of \begin{equation}\label{E: relations barR_k}
	L(k\Lambda_0)=N(k\Lambda_0)/\bar R N(k\Lambda_0)
\end{equation} for any positive integer level $k$, and that the main ingredient in the proof would be relations among relations for $\pi$ of lenght $k+2$. In \cite{PS3} this is done for all $n\geq 2$ and $k=2$. In this paper similar combinatorial arguments are used to prove
relations among reletions for $\pi$ of lenght $k+2$ for any $k\geq 2$ but for $n=2$ (see Theorem 5.1  below).

The basic setting and notations are given in the Section 2. Especially,  the
basis $B$ of a simple Lie algebra of $C_2$ type is visualized by the aproppriate triangual scheme. In this paper we are interested in $C_{2}\sp{(1)}$ case, but in some parts it will be convenient to use the notation for arbitrary $C_{n}\sp{(1)}$ (see \cite{PS3}). Therefore, the {\it array of negative root vectors} of $C_{n}\sp{(1)}$  was introduced in the general case for each $n$. In fact, it is a visualization of $\bar B_{<0}$ based on the triangular scheme for $B$. For practical reasons in this paper we use the formalism 
$$\bar B_{<0} \equiv  [\bar{B}_{<0}]_1^{2n+1}\ . $$  
The array of negative root vectors $[\bar{B}_{<0}]_1^{2n+1}$  is just a matrix consisting of $2n+1$ rows and infinitely many columns. At the end of the Section 2 the array $[\bar{B}_{<0}]_1^{5}$ for $C_2^{(1)}$ is presented. 

In Section 3 the leading term $\rho$ of relation $r(\rho)$ is defined for level $k$ standard modules of affine Lie algebra of the type $C_n^{(1)}$. The interpretation of  a leading term as a zig-zag line in $[\bar{B}_{<0}]_1^{2n+1}$ is based on the previous considerations in  \cite{PS3}. The partial order on $[\bar{B}_{<0}]_1^{2n+1}$ necessary for the definition of leading terms is explained on only three successive triangles which form the trapeziod $T$ (see Figure 5). This fact will be of large practical help for  calculations in the rest of the paper. 

The Section 4 is devoted to the procedure for counting of the number of embeddings $\rho\subset\pi$. All calculations are done on three succesive triangles i.e., on the trapozeoid $T$ (see Figure 6). The four types of  $\text{supp\,}\pi $   which allow at least two embedings of leading terms $\rho\subset\pi$ in  the trapozeoid $T$ are classified in Proposition 4.1. Moreover, in this proposition  all $\Sigma_T(\mathcal{X})$ are listed, where $\mathcal{X}$ is one of mentioned support types.  The $\Sigma_T(\mathcal{X})$ is the number of all such  $\text{supp\,}\pi \subset T$ allowing at least two embeddings. Finally, for a colored partition $\pi$ of the length $\ell(\pi)=k+2$ in trapezoid $T\subset [\bar{B}_{<0}]_1^{5}$  we can calculate the corresponding $N_T(\mathcal{X})$ i.e.,  the number related to all colored partions $\pi$ over support of the type $\mathcal{X}$ which allow at least two embeddings.

In Section 5 the main result of this paper, the Theorem 5.1, is proven.  This theorem is the first step in reducing the spanning set 
\begin{equation}\label{E: spanning of maximal submodule_k}
	u(\rho\subset\pi)v_{k\Lambda_0}\in N^1(k\Lambda_0)\ , \ \forall k \in\mathbb Z_{<0}
\end{equation}
to a basis by proving the {\it relations among realations} (\ref{E: relations among relations}). As in \cite{PS3}) the proof of the Theorem 5.1. is also based on the verification of the combinatorial statement that ``the number of needed relations among relations is equal to the number of constructed relations among relations by using the representation theory''. Since both observed numbers depend on the level $k$ it is proven that both of these numbers behave as polynomials of degree $4$ . Therefore, it was sufficient to carry out the proof for five different values of $k$. The natural choice of five values $k$ are $k=1,\dots ,5$. For $k=1,2$ it was done in \cite{PS1} and \cite{PS3}. The case $k=3,4,5$ is the final part of the proof of Theorem 5.1.\\
At the end of Introduction let us mention some of related recent  results with a combinatorial approach \cite{DK}, \cite{CMPP}, \cite{R}, \cite{PT}.

%%%%%%%%%%%%%%%%%%%%%%%%%%%%%
\section{The array of negative root vectors of $C_{2}\sp{(1)}$}

We fix a simple Lie algebra $\mathfrak{g}$ of type $C_2$. For a given Cartan subalgebra $\mathfrak h$ and the corresponding
root system $\Delta$ we can write (as in \cite{B})
\begin{equation*}
	\Delta = \{\pm(\varepsilon_1-\varepsilon_2), \pm(\varepsilon_1+\varepsilon_2), \pm2\varepsilon_1, \pm2\varepsilon_2)\}
\end{equation*}
with simple roots 
$\alpha_1= \varepsilon_1-\varepsilon_2$, $\alpha_2 = 2\varepsilon_2$ and maximal (highest) root $\theta=2 \varepsilon_1$. For each $\alpha\in\Delta$ we choose a root vector $X_{\alpha}$ such that $[X_{\alpha},X_{-\alpha}]=\alpha^{\vee}$. \\
The vectors $\{X_{\alpha}\mid\alpha\in \Delta\}\cup \{\alpha_1^{\vee},\alpha_2^{\vee} \}$ form a basis $B$ of $\mathfrak g$. For root vectors
$X_{\alpha}$ we shall use the following notation:
$$\begin{array}{ccccc}
	X_{\varepsilon_i-\varepsilon_j} \simeq X_{i\underline{j}} &\text{or just}& i\underline{j} & if & i\neq j \\
	X_{\varepsilon_i+\varepsilon_j} \simeq X_{ij} &\text{or just}& {i}j & if & i\leq j\\
	X_{-\varepsilon_i-\varepsilon_j} \simeq X_{\underline{ij}} &\text{or just}& \underline{ij} & if& i\geq j\\
\end{array}
$$
According to this notation we also write $1\underline{1}$ and $2\underline{2}$ for $\alpha_1^{\vee}$ and $\alpha_2^{\vee}$ respectively. The  
 $x_\theta$ is noted by $X_{11}$.
The basis $B$ of $\mathfrak g$ we shall write in a triangular scheme:
\begin{center}
	\begin{tikzpicture} [scale=0.7]
		%\draw (0,0) -- +(10,0) -- +(5,5) -- cycle;
		\node at (0,0) {$11$};\node at (2,0) {$22$};\node at (4,0) {$\underline{22}$};
		\node at (6,0) {$\underline{11}$};
		\node at (1,1) {$12$};\node at (3,1) {$2\underline{2}$};\node at (5,1) {$\underline{21}$};
		\node at (2,2) {$1\underline{2}$};\node at (4,2) {$2\underline{1}$};
		\node at (3,3) {$1\underline{1}$};
	\end{tikzpicture}\ .
\end{center}	
\begin{center}
	Figure 1
\end{center}
However, the usual matrix indexation is a more appropriate notation for the extension of $B$ to   $\bar B_{<0}=\{b(i)\mid b\in B, i\in\mathbb Z_{<0}\}$, where $b(i)=b\otimes t^i$. 
For the above basis $B$ (see Figure 1) we have the following scheme 
\begin{center}
	\begin{tikzpicture} [scale=0.7]
		%\draw (0,0) -- +(10,0) -- +(5,5) -- cycle;
		\node at (0,0) {$11$};\node at (2,0) {$12$};\node at (4,0) {$13$};
		\node at (6,0) {$14$};
		\node at (1,1) {$21$};\node at (3,1) {$22$};\node at (5,1) {$23$};
		\node at (2,2) {$31$};\node at (4,2) {$32$};
		\node at (3,3) {$41$};
	\end{tikzpicture}\ .
\end{center}	
\begin{center}
	Figure 2
\end{center}
As it was mentioned in the Introduction  we are interesed in the $C_{2}\sp{(1)}$ case, but in some parts it will be convenient to use the notation  for arbitrary $C_{n}\sp{(1)}$. Moreover, using the following scheme of an infinite sequence of triangles   
\begin{center}
	\begin{tikzpicture} [scale=0.5]
		%\draw[dashed] (-0.6,0.5) -- +(-5,5) -- +(5,5) -- cycle;
		%\node at (-0.6,3.5) {$B\otimes t^{0}$};
		\draw (0,0) -- +(10,0) -- +(5,5) -- cycle;
		\node at (5,2) {$B\otimes t^{-1}$};
		\node at (1.3,0.5) {$X_{11}$};\node at (8.7,0.5) {$X_{1,2n}$};\node at (5.5,4) {$X_{2n,1}$};
		\draw (10.5,0.8) -- +(-5,5) -- +(5,5) -- cycle;
		\node at (11,3.5) {$B\otimes t^{-2}$};
		\node at (10.5,1.7) {$X_{2,2n}$};\node at (7.5,5.3) {$X_{2n+1,1}$};\node at (14.5,5.3) {$X_{2n+1,2n}$};
		\draw (11,0) -- +(10,0) -- +(5,5) -- cycle;
		\node at (16,2) {$B\otimes t^{-3}$};
		\node at (13,0.5) {$X_{1,2n+1}$};\node at (19.8,0.5) {$X_{1,4n}$};\node at (17,4) {$X_{2n,2n+1}$};
		%\draw (21.4,0.5) -- +(-5,5) -- +(5,5) -- cycle;
		%\node at (21.5,3.5) {$\mathfrak{g}\otimes t^{-4}$};
		%\node[fill=black, circle, inner sep=1pt] at (25,2.5){};\node[fill=black, circle, inner sep=1pt] at (25.5,2.5){};\node[fill=black, circle, inner sep=1pt] at (26,2.5){};
		\node[fill=black, circle, inner sep=1pt] at (20.5,3){};\node[fill=black, circle, inner sep=1pt] at (21,3){};\node[fill=black, circle, inner sep=1pt] at (21.5,3){};
	\end{tikzpicture}
\end{center}	
\begin{center}
	Figure 3
\end{center}	     
we can extend the matrix indexation of $B$ to  $\bar{B}_{<0}=\coprod_{j>0}{B}\otimes t^{-j}$. The  matrix indexation of $\bar{B}_{<0}$  for arbitrary $C_{n}\sp{(1)}$ we call {\it the array of negative root vectors of $C_{n}\sp{(1)}$}  and denote by $[\bar{B}_{<0}]_1^{2n+1}$.\\
In the Figure 4  the array $[\bar{B}_{<0}]_1^{5}$ of negative root vectors of $C_{2}\sp{(1)}$ is  presented with
\begin{center}
\begin{tikzpicture} [scale=0.7] 
	%\draw (0,0) -- +(10,0) -- +(5,5) -- cycle;
	\node at (0,0) {$11$};
	\node at (1,1) {$21$};
	\node at (2,2) {$31$};
	\node at (3,3) {$41$};
	\node at (4,4) {$51$};
	\node at (2,0) {$12$};
	\node at (3,1) {$22$};
	\node at (4,2) {$32$};
	\node at (5,3) {$42$};
	\node at (6,4) {$52$};
	\node at (4,0) {$13$};
	\node at (5,1) {$23$};
	\node at (6,2) {$33$};
	\node at (7,3) {$43$};
	\node at (8,4) {$53$};
	\node at (6,0) {$14$};
	\node at (7,1) {$24$};
	\node at (8,2) {$34$};
	\node at (9,3) {$44$};
	\node at (10,4) {$54$};
	\node at (8,0) {$15$};
	\node at (9,1) {$25$};
	\node at (10,2) {$35$};
	\node at (11,3) {$45$};
	\node at (10,0) {$16$};
	\node at (11,1) {$26$};
	\node at (12,2) {$36$};
	\node at (12,0) {$17$};
	\node at (13,1) {$27$};
	\node at (14,0) {$18$};
	\node at (15,1) {$28$};
	\node at (14,2) {$37$};
	\node at (16,2) {$38$};
	\node at (13,3) {$46$};
	\node at (15,3) {$47$};
	\node at (17,3) {$48$};
	\node at (12,4) {$55$};
	\node at (14,4) {$56$};
	\node at (16,4) {$57$};
	\node at (18,4) {$58$};
	\draw [dotted] (0,0) -- +(6,0) -- +(3,3) -- cycle;
	\draw [dotted] (4,4) -- +(3,-3) -- +(6,0) -- cycle;
	\draw [dotted] (8,0) -- +(6,0) -- +(3,3) -- cycle;
	\draw [dotted] (12,4) -- +(3,-3) -- +(6,0) -- cycle;
	\node[fill=black, circle, inner sep=1pt] at (17.5,2){};\node[fill=black, circle, inner sep=1pt] at (18,2){};\node[fill=black, circle, inner sep=1pt] at (18.5,2){};
\end{tikzpicture}\ .
\end{center}
\begin{center}
	Figure 4
\end{center}

\section{The leading terms of relations for level $k$ standard modules for $C_2\sp{(1)}$ }
In the Introduction it is explained that the relations $r\in \bar{R}$ are the coefficients of certain vertex operators which can be parametrized as $r=r(\rho)$ by coresponding leading term $\rho\in S(\hat{\mathfrak g})$. The following partial order on $[\bar{B}_{<0}]_1^{2n+1}$ is needed to describe leading terms:

\begin{equation}\label{partial order}
	X_{i,j}\trianglelefteq X_{p,r} \quad \text{if}\quad i\in\{1,\dots , p\}\quad \text{and}\quad j\in\{r,p+r-i\}.	
\end{equation}
\bigskip

\noindent
In other words, $b \trianglelefteq a$ if $b=X_{i,j}$ lies in the cone bellow the vertex $a=X_{p,r}$, as depicted on Figure 5 below:
\begin{center}
	\begin{tikzpicture} [scale=0.5]
		\draw[dashed] (0,0) -- +(10,0) -- +(5,5) -- cycle;
		\draw[dashed] (10.5,0.5) -- +(-5,5) -- +(5,5) -- cycle;
		\draw[dashed] (11,0) -- +(10,0) -- +(5,5) -- cycle;
		\draw (-0.8,-0.3) -- +(22.6,0) --+(16.5,6.2) --+(6.1,6.2) -- cycle;
		
		\node at (9,4) [circle, draw, inner sep=4pt] (A)  {a};
		\node at (10,2) [circle, draw, inner sep=3pt] (B)  {b};
		\draw (5.5,0) -- (A) -- (12.5,0);
		%\node at (9,3) {$b \trianglelefteq a$};
		\node at (11.5,3.2) {$b \trianglelefteq a$};
		\node[fill=black, circle, inner sep=1pt] at (20.5,3){};\node[fill=black, circle, inner sep=1pt] at (21,3){};\node[fill=black, circle, inner sep=1pt] at (21.5,3){};
	\end{tikzpicture}\\
	Figure 5
\end{center}
%\bigskip
In Figure 5, three successive triangles of $[\bar{B}_{<0}]_1^{2n+1}$ are marked in the trapezoid $T$. As will be seen later, all further considerations and calculations will be made on just such a trapezoid T.\\
With above settings we can define leading terms $\rho$ of relations $r(\rho)\in\bar R$  as follows (for details see \cite{PS2} and \cite{PS3}).
\begin{definition}
	The monomial
	\begin{equation}\label{E:the leading terms of relations}
		\rho=a_1^{m_1}a_2^{m_2}\dots a_s^{m_s}, \quad m_1+m_2+\dots+m_s=k+1,
	\end{equation}
	over {\it downward zig-zag  line} of $s$ points in $[\bar{B}_{<0}]_1^{2n+1}$
	$$
	a_1\vartriangleright a_2\vartriangleright\dots \vartriangleright a_s, \quad 1\leq s\leq k+1 
	$$
	is {\it the leading term}  of the relation $r(\rho)\in\bar R$ for level $k$ standard modules of affine Lie algebra of the type $C_n\sp{(1)}$.
\end{definition}
Moreover, these are all leading terms of $\bar R$ in $\mathcal P_{<0}$  (for more details see Section II. in \cite{PS3}). Also, it is very important to emphasize that the position of the trapezoid $T$ in Figure 5 is in accord with Figure 3 only when the middle triangle is $B\otimes t^j$ for $j$ even, and for $j$ odd the figure should be flipped. However, in our arguments this will make no difference because the flipped zig-zag downward line is again a zig-zag downward line. In Figure 6 the trapezoid $T$ for $[\bar{B}_{<0}]_1^{5}$ is presented where $n=2$.\\
For instance, the following monomial
$$
X_{2\varepsilon_2}^2(-2)\ {\alpha_2^{\vee}}(-2)\ X_{\varepsilon_1-\varepsilon_2}^2(-2)\ X_{\varepsilon_2-\varepsilon_1}(-1)\ X_{-2\varepsilon_1}^3(-1)
$$
where  $m_1+m_2+\dots+m_5= 9 = k+1$ is  the leading term 
$$
\rho = X_{52}^2\ X_{43}\ X_{33}^2\ X_{23}\ X_{14}^3
$$
of the relation $r(\rho)\in\bar R$ for level $8$ standard modules of affine Lie algebra of the type $C_2\sp{(1)}$. The corresponding zig-zag line   
$$
X_{52}\vartriangleright X_{43}\vartriangleright X_{33}\vartriangleright X_{23}\vartriangleright X_{14}\, \simeq\, {52}\vartriangleright {43}\vartriangleright {33}\vartriangleright {23}\vartriangleright {14}\ .
$$
looks like the one in the Figure 6. 
\begin{figure}[h]
\begin{center}
	\begin{tikzpicture} [scale=0.7] 
		%\draw (0,0) -- +(10,0) -- +(5,5) -- cycle;
		\node at (0,0) {$11$};
		\node at (1,1) {$21$};
		\node at (2,2) {$31$};
		\node at (3,3) {$41$};
		\node at (4,4) {$51$};
		\node at (2,0) {$12$};
		\node at (3,1) {$22$};
		\node at (4,2) {$32$};
		\node at (5,3) {$42$};
		\node[fill=green, circle, inner sep=4pt] at (6,4) {};
		\node at (6,4) {$52$};
		\draw[green] (6,4) -- (7,3) -- (6,2) -- (5,1)  -- (6,0);
		\node at (4,0) {$13$};
		\node[fill=green, circle, inner sep=4pt] at (5,1) {};
		\node at (5,1) {$23$};
		\node[fill=green, circle, inner sep=4pt] at (6,2) {};
		\node at (6,2) {$33$};
		\node[fill=green, circle, inner sep=4pt] at (7,3) {};
		\node at (7,3) {$43$};
		\node at (8,4) {$53$};
		\node[fill=green, circle, inner sep=4pt] at (6,0) {};
		\node at (6,0) {$14$};
		\node at (7,1) {$24$};
		\node at (8,2) {$34$};
		\node at (9,3) {$44$};
		\node at (10,4) {$54$};
		\node at (8,0) {$15$};
		\node at (9,1) {$25$};
		\node at (10,2) {$35$};
		\node at (11,3) {$45$};
		\node at (10,0) {$16$};
		\node at (11,1) {$26$};
		\node at (12,2) {$36$};
		\node at (12,0) {$17$};
		\node at (13,1) {$27$};
		\node at (14,0) {$18$};
		%\node at (15,1) {$28$};
		%\node at (14,2) {$37$};
		%\node at (16,2) {$38$};
		%\node at (13,3) {$46$};
		%\node at (15,3) {$47$};
		%\node at (17,3) {$48$};
		%\node at (12,4) {$55$};
		%\node at (14,4) {$56$};
		%\node at (16,4) {$57$};
		%\node at (18,4) {$58$};
		\draw [dotted] (0,0) -- +(6,0) -- +(3,3) -- cycle;
		\draw [dotted] (4,4) -- +(3,-3) -- +(6,0) -- cycle;
		\draw [dotted] (8,0) -- +(6,0) -- +(3,3) -- cycle;
		\draw  (-0.8,-0.3) -- (3.7,4.4) -- (10.3,4.4) -- (14.8,-0.3) -- cycle;
%		\node[fill=black, circle, inner sep=1pt] at (13.5,2){};\node[fill=black, circle, inner sep=1pt] at (14,2){};\node[fill=black, circle, inner sep=1pt] at (14.5,2){};
	\end{tikzpicture}
\end{center}
Figure 6
\end{figure}

%%%%%%%%%%%%%%%%%%%%%%%%%%%%%%%%%%%%%%%%
\section{Colored partitions allowing at least two embeddings of leading terms }

In this section we count the number of embeddings $\rho\subset\pi$  in three successive triangles of $[\bar{B}_{<0}]_1^{2n+1}$ (i.e., in the trapezoid $T$ on Figure 5). 
Following the notation from previous sections, we can write the colored partition $\pi\in\mathcal P_{<0}$ as
\begin{equation}\label{E:colored partition}
	\pi=\prod\sb{a\in[\bar{B}_{<0}]_1^{2n+1}} a^{\pi(a)}\ 
\end{equation}
where  $\ell(\pi)=k+2$, $\text{supp\,}\pi =\{a_1,a_2,\dots a_s\}$ and $|\text{supp\,}\pi| = s\leq 2n+1$.\\
Now let us assume that $\pi$ allows two embeddings of leading terms of relations for level $k$ standard modules i.e. $\rho_1\subset \pi$ and $\rho_2\subset \pi$ are two different zig-zag lines of length $k+1$.\\  
Let us summarize the descriptions of all possible cases for $\text{supp\,}\pi$. In \cite{PS3} all such $\text{supp\,}\pi$ are classfied in Lemma 3.1. i.e. $\text{supp\,}\pi $ is one of the following types 
	\begin{itemize}
		\item[$(A_{s})$] $\text{supp\,}\pi =\{a_1, \dots, a_s\}$, $s\geq 2$, $a_1\vartriangleright\dots\vartriangleright a_s$.
		\item[$(B_{s\,\delta})$] $\text{supp\,}\pi =\{a_1, \dots, a_{s},b,c\}$, $s\geq 1$, $a_1\vartriangleright\dots\vartriangleright a_{s}$, $a_{s}\vartriangleright b$, $a_{s}\vartriangleright c$ and $b$ and $c$ are not comparable. We set $\delta$ to be $\vert$  if $b$ and $c$ are in the same row, and $\vert\vert $ otherwise.
		\item[$(C_{\delta\, s})$] $\text{supp\,}\pi =\{b,c,a_1, \dots, a_{s}\}$, $r\geq 1$, $a_1\vartriangleright\dots\vartriangleright a_{s}$, $b\vartriangleright a_1$, $c\vartriangleright a_1$ and $b$ and $c$ are not comparable. We set $\delta$ to be $\vert$  if $b$ and $c$ are in the same row, and $\vert\vert $ otherwise
		\item[$(D_{s\, \delta\, t})$] $\text{supp\,}\pi =\{a_1, \dots, a_s,b,c,d_1,\dots,d_t\}$,  $s,t\geq 1$, $a_1\vartriangleright\dots\vartriangleright a_s$, $a_s\vartriangleright b\vartriangleright d_1$, $a_s\vartriangleright c\vartriangleright d_1$, $d_1\vartriangleright\dots\vartriangleright d_t$, and $b$ and $c$ are not comparable. We set $\delta$ to be $\vert$  if $b$ and $c$ are in the same row, and $\vert\vert $ otherwise.
	\end{itemize}
	Note that in the case $(A_{s})$ it holds $|\text{supp\,}\pi| = s$, in cases $(B_{s\,\delta})$ and $(C_{\delta\, s})$ $|\text{supp\,}\pi| = s+2$ and in the case $(D_{s\, \delta\, t})$ $|\text{supp\,}\pi| = s+t+2$.\\
	In an effort to count the “number of relations among relations needed” for embeddings $\rho\subset\pi$ on the trapezoid $T$ we need the following notation
	\begin{eqnarray*}
	N(\pi)=\max\{\#\mathcal E(\pi)-1,0\} &\text{where}& \mathcal E(\pi)=\{\rho\in \ell\text{\!\it t\,}(\bar R)\mid \rho\subset\pi\}
	\end{eqnarray*}
and
	\begin{eqnarray*}
	\Sigma_T(\mathcal{X}) & = & \sum_{S\subset T,\ S\text{ is of the type }\mathcal{X}}1\\
	&&\nonumber\\
	N_T(\mathcal{X}) &=& \sum_{\pi, \, \text{supp\,}\pi\subset T, \,\text{supp\,}\pi\text{ is of the type }\mathcal{X}}N(\pi)
	\end{eqnarray*}
where  $\mathcal{X}$ is one of  above types for $\text{supp\,}\pi $. 
Due to Lemmas 3.2. and 3.3, again from \cite{PS3}, we have: 
\begin{eqnarray}\label{N_{T}(X)}
	N_{T}(A_s) & = & (s-1){k+1\choose s-1}\,\Sigma_T(A_s)\nonumber\\
	&&\nonumber\\
	N_{T}(B_{s\,\delta}) & = & {k-1\choose s-1}\,\Sigma_{T}(B_{s\,\delta})\nonumber\\
	&&\\
	N_{T}(C_{\delta\, s}) & = & {k-1\choose s-1}\,\Sigma_{T}(C_{\delta\, s})\nonumber\\
	&&\nonumber\\
	N_{T}(D_{s \,\delta\, t}) & = & {k-1\choose s+t-1}\,\Sigma_{T}(D_{s \,\delta\, t})\ .\nonumber
\end{eqnarray}
Moreover, in  \cite{PS3}  $N_T(\mathcal{X})$ are explicitly calculated for $[\bar{B}_{<0}]_1^{2n+1}$, i.e., for arbitrary $n$. Furthermore, in the case  $[\bar{B}_{<0}]_1^5$ where $n=2$ we can state the following proposition.
\begin{proposition}\label{L: klasifikacija dva ulaganja}
 	Let $\ell(\pi)=k+2$ and let the trapezoid $T\subset [\bar{B}_{<0}]_1^{5}$ be oriented as in Figure 6. Assume that $\pi$ allows two embeddings of leading terms of relations for level $k$ standard modules. Then $\text{supp\,}\pi $ is one of the following types :
$$
\begin{array}{rcll}
 		(A_{s}) &for& s=2,3,4,5 & \nonumber\\
 		(B_{s\,|}) &for& s=1,2,3,4& \nonumber\\
 		(B_{s\,||}) &for& s=1,2,3& \nonumber\\
 		(C_{|\, s}) &for& s=1,2,3,4& \nonumber\\
 		(C_{||\, s}) &for& s=1,2,3&  \nonumber\\
 		(D_{s\, |\, t}) &for& s,t=1,2,3& where\ s+t<5 \nonumber\\
 		(D_{s\, ||\, t}) &for& s,t=1,2 &where\ s+t<4 .\nonumber
\end{array}
$$
For the above listed types, respectively we have
\begin{equation}\label{Sigma_T}
	\begin{array}{cccc}
	\Sigma_T(A_5) =  64 & \Sigma_T(A_4)  =  216& \Sigma_T(A_3) =  268&	\Sigma_T(A_2) =  145\\
	&&&\\
	\Sigma_{T}(B_{4\,|}) =  32& \Sigma_{T}(B_{3\,|}) =  132& \Sigma_{T}(B_{2\,|}) =  211& \Sigma_{T}(B_{1\,|}) =  161\\
	\Sigma_{T}(B_{3\,||}) = 32& \Sigma_{T}(B_{2\,||}) = 124& \Sigma_{T}(B_{1\,||}) = 182 &\\
	&&&\\
	\Sigma_{T}(C_{4\,|})=24&\Sigma_{T}(C_{3\,|})=96&\Sigma_{T}(C_{2\,|})=147&\Sigma_{T}(C_{1\,|})=105\\
	\Sigma_{T}(C_{3\,||})=24&\Sigma_{T}(C_{2\,||}) =90&\Sigma_{T}(C_{1\,||})=126&\\
	&&&\\
	\Sigma_{T}(D_{2|2})=16&\Sigma_{T}(D_{3|1})=16&\Sigma_{T}(D_{1|3})= 16&	\\
	\Sigma_{T}(D_{2|1})=62&\Sigma_{T}(D_{1|2})=62&\Sigma_{T}(D_{1|1}) = 95&	\\
	\Sigma_{T}(D_{2||1})=16&\Sigma_{T}(D_{1||2})=16&\Sigma_{T}(D_{1||1})= 58 & \ .
\end{array}
\end{equation}
 \end{proposition}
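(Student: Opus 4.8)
The plan is to establish the Proposition in two stages corresponding to its two assertions: (I) the list of support types together with the admissible ranges of $s$ and $t$, and (II) the explicit counts $\Sigma_T(\mathcal X)$.

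For (I) I would take Lemma~3.1 of \cite{PS3} as given — it reduces the support of $\pi$ to one of the four shapes $(A_s),(B_{s\delta}),(C_{\delta s}),(D_{s\delta t})$ — and then only determine which parameter values survive in $[\bar B_{<0}]_1^5$. The key point is that two distinct points in the same row are never $\trianglelefteq$-comparable, so $a\vartriangleright b$ forces the row index of $b$ to be strictly smaller than that of $a$; consequently a $\vartriangleright$-chain occupies as many rows as it has points and hence has at most $5$ points. Counting how many rows each template occupies — $s$ for $(A_s)$, $s+1$ for $(B_{s|})$ and $(C_{|s})$, $s+2$ for $(B_{s\|})$ and $(C_{\|s})$, $s+t+1$ for $(D_{s|t})$, $s+t+2$ for $(D_{s\|t})$ — and requiring it to be at most $5$ yields exactly the bounds in the statement ($s\le 5$; $s\le 4$; $s\le 3$; $s+t<5$; $s+t<4$). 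The lower bounds $s,t\ge1$ are built into the definitions of the types; the only extra exclusion is $(A_1)$, since a one-point support $\{a\}$ carries only the single leading term $a^{k+1}\subset a^{k+2}$ and so gives $N(\pi)=0$. That each remaining type is in fact realised I would then read off from $\Sigma_T(\mathcal X)>0$ in part (II) together with Lemmas~3.2 and~3.3 of \cite{PS3}.

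For (II) I would work directly inside the $30$-element order ideal $T=\{X_{i,j}\mid 1\le i\le5,\ 1\le j\le 9-i\}$ of the array, i.e.\ the trapezoid of Figure~6. The computation hinges on one simplification: if the top point $X_{i_1,r_1}$ of a downward $\vartriangleright$-chain lies in $T$, then so does the whole chain, because a chain point in row $i_m$ has column $r_m\le r_1+(i_1-i_m)\le(9-i_1)+(i_1-i_m)=9-i_m$. Thus downward chains feel no boundary, and for $(A_s)$ one obtains the closed formula
\[
\Sigma_T(A_s)=\sum_{\{i_1>\dots>i_s\}\subseteq\{1,\dots,5\}}(9-i_1)\prod_{m=1}^{s-1}(i_m-i_{m+1}+1),
\]
which evaluates to $64,216,268,145$ for $s=5,4,3,2$. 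For the branching families I would fix the ``singular'' part of the configuration — the fork $a_s$ in $(B_{s\delta})$, the merge point $a_1$ in $(C_{\delta s})$, and the pair of tips $a_s,d_1$ of the central rhombus in $(D_{s\delta t})$ — and express $\Sigma_T(\mathcal X)$ as a sum over the position of that part in $T$ of a product: (number of downward chains issuing from it) $\times$ (number of upward chains ending at it) $\times$ (number of admissible incomparable pairs $\{b,c\}$, resp.\ rhombi, of the prescribed type $\delta$). The first factor is boundary-free by the observation above; the second is where the staircase edge $j=1$ truncates the admissible columns, so I would split that sum by the distance of the relevant column from the edge; the third is a small binomial coefficient in the vertical extent of the cone. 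Summing over $T$ and adding the boundary corrections produces the remaining twenty entries of~\eqref{Sigma_T}. There is no shortcut identifying the $B$- and $C$-counts: $T$, being three triangles glued in the pattern up--down--up, is not invariant under the order-reversing reflection, and indeed $\Sigma_T(B_{s\delta})>\Sigma_T(C_{\delta s})$ throughout.

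The hard part, and the only place a slip is likely, is part (II) for the $(B),(C),(D)$ families: keeping the staircase boundary of $T$ properly accounted for in the ``upward-chain'' factor, and making sure each enumerated configuration is assigned to exactly one of the four templates — so that, for instance, a putative $(B_{s\delta})$ whose $\{b,c\}$ turn out comparable is recognised as an $(A)$-chain and dropped, and a $(B)$-configuration that carries a further descending chain is recognised as a $(D)$. Since $n=2$ everything reduces to finite, fully explicit sums; as a cross-check I would also specialise the general-$n$ expressions for $N_T(\mathcal X)$ from Section~III of \cite{PS3} to $n=2$, handling the few row-filling configurations by the direct count above, and confirm agreement with~\eqref{Sigma_T}.
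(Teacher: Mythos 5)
Your proposal is correct and, where it is carried out, goes well beyond what the paper actually writes down: the paper's proof of this proposition is essentially a citation, deducing the type list from Lemma~3.1 of \cite{PS3} together with the five-row bound, and reading the twenty-three values of $\Sigma_T(\mathcal X)$ off the general-$n$ Lemmas~3.4, 3.6--3.8 and 3.10--3.12 of \cite{PS3} specialised to $n=2$ --- which is exactly the ``cross-check'' you relegate to the end. Two points of comparison. First, for the type list your row-counting argument is actually sharper than the paper's stated justification ``$|\text{supp\,}\pi|\le 5$'': types such as $(B_{4\,|})$, and $(D_{s\,|\,t})$ with $s+t=4$, have \emph{six} support elements, and what is really bounded by $5$ is the number of occupied rows (two distinct same-row points being incomparable), which is precisely the invariant you count; your bounds $s\le5$, $s+1\le5$, $s+2\le5$, $s+t+1\le5$, $s+t+2\le5$ reproduce the ranges in the statement. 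Second, for the counts your direct enumeration inside the $30$-point trapezoid $T=\{X_{i,j}\mid 1\le i\le 5,\ 1\le j\le 9-i\}$ is sound: you correctly read (\ref{partial order}) as the \emph{filled} cone of Figure~5 (the reading forced by transitivity and confirmed by $\Sigma_T(A_2)=145$ rather than $100$), the observation that downward cones never leave $T$ --- while upward cones are truncated by \emph{both} slanted edges $j=1$ and $j=9-i$, not only by $j=1$ as you say --- is the right structural fact, and your closed formula for $\Sigma_T(A_s)$ evaluates to $64$, $216$, $268$, $145$ for $s=5,4,3,2$ as required; spot-checking your fork/merge/rhombus scheme gives $\Sigma_T(B_{4\,|})=(4\cdot 2^3)\cdot 1=32$, $\Sigma_T(C_{4\,|})=3\cdot 2^3=24$ and $\Sigma_T(D_{2|2})=8\cdot 2=16$, all matching (\ref{Sigma_T}). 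The only caveat is that the remaining entries of (\ref{Sigma_T}) are described by a procedure rather than computed, so as written your argument still leans on the specialisation of \cite{PS3} for completeness --- but that is no weaker than the paper's own proof, which consists of that specialisation and nothing else.
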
 
%%%%%%%%
\begin{proof}
Since the array $[\bar{B}_{<0}]_1^{5}$ consists of $5$ rows, it is obvious that $\pi$ cannot be composed of more than five elements of the base $B$ i.e. $|\text{supp}\, \pi|\leq 5$. The  possible types of $\text{supp\,}\pi $  are listed above (see also Lemma 3.1 in \cite{PS3}) and the first statement of the proposition is true. The second statement follows from the fact that $n=2$. More precisely, Lemma 3.4 implies statement for type $(A_s)$, Lemmas 3.6 and 3.10 for the type $(B_{s\, \delta})$, Lemmas 3.7 and 3.11 for the type $(C_{\delta\, s})$ and finally Lemmas 3.8 and 3.12 for the type $(D_{s\, \delta\, t})$. All mentioned lemmas are cited from \cite{PS3}.  	
\end{proof}	
%%%%%%%%%%%%%%%%%%%%%%%%%%%%%%%%%%%%%%%

\section{Level $k$ relations for $C_2^{(1)}$ for two embeddings in $\pi$ of length $\ell(\pi)=k+2$}
In this sectionr, we will present the main result of this paper, which is Theorem 5.1. Before stating the theorem, we will introduce additional notions.\\
For $\ell\geq0$ and $m\in\mathbb Z$ set $$\mathcal P^\ell =\{\pi\in\mathcal P\mid \ell(\pi)=\ell\}\quad \text{and}\quad\mathcal P^\ell(m) =\{\pi\in\mathcal P\mid \ell(\pi)=\ell, |\pi|=m\}\ .$$  
For a colored partition (\ref{E:colored partition}) we define {\it the shape of $\pi$} as the ordinary partition of  $|\pi|\in-\mathbb N$
$$
\text{sh\,}\pi=\prod\sb{a\in [\bar{B}_{<0}]_1^{2n+1}} |a|^{\pi(a)}
$$
with parts $|a|\in-\mathbb N$ for $a\in\text{supp\,}\pi=\{a\in[\bar{B}_{<0}]_1^{2n+1}\mid\pi(a)>0\}$. Note that $$|\text{sh\,}\pi|=|\pi|=\sum\sb{a\in[\bar{B}_{<0}]_1^{2n+1}} |a|\cdot{\pi(a)}\ .$$
As usual the ordinary partitions are presented by  Young tableauxes. For instance, the Young tableaux for zig-zag line 
$$
\rho = X_{52}\ X_{43}\ X_{33}\ X_{23}\ X_{14}$$
$$ \simeq X_{2\varepsilon_2}(-2)\ {\alpha_2^{\vee}}(-2)\ X_{\varepsilon_1-\varepsilon_2}(-2)\ X_{\varepsilon_2-\varepsilon_1}(-1)\ X_{-2\varepsilon_1}(-1)
$$
interpreted as the ordinary partition looks like
$$	\begin{array}{l}
	\sq\sq \\
	\sq\sq\\
	\sq\sq \\
	\sq  \\
	\sq        
\end{array}    		
$$
Indeed, based on Figure 6, it follows that the monomial $X_{52}\ X_{43}\ X_{33}$ is contained in the second triangle, and the monomial $X_{23}\ X_{14}$ in the first triangle.\\
Finally, the main result of this paper is the following theorem:
\begin{theorem}\label{T:the main theorem}
	For any two embeddings
	$\rho_1 \subset \pi$ and $\rho_2 \subset \pi$ in $\pi\in\mathcal P^{k+2}(m)$,
	where $\rho_1, \rho_2 \in\ell \!\text{{\it t\,}}(\bar{R})$, we
	have a level $k$ relation for $C_2^{(1)}$
	\begin{equation}\label{E:9.2}
		u(\rho_1 \subset \pi) - u(\rho_2 \subset \pi) 
		=\sum_{\pi\prec \pi', \ \rho\subset\pi'}c_{\rho\subset\pi'}\,u(\rho\subset\pi').
	\end{equation}
\end{theorem}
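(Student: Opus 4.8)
The plan is to follow the strategy already used in \cite{PS1} for $k=1$ and in \cite{PS3} for $k=2$: reduce the existence of the relations among relations \eqref{E:9.2} to the purely combinatorial assertion that, for a given colored partition $\pi$ of length $k+2$, the number of relations among relations that are \emph{needed} equals the number of relations among relations that can be \emph{constructed} from the representation theory. The "needed" count is exactly $N(\pi)=\max\{\#\mathcal E(\pi)-1,0\}$ summed over all $\pi$ of length $k+2$ whose support lies in a fixed trapezoid $T$ of three successive triangles; by Proposition 4.1 and \eqref{N_{T}(X)} this total is an explicit expression in $k$, namely a sum over the support types $(A_s),(B_{s\delta}),(C_{\delta s}),(D_{s\delta t})$ of the products of the binomial coefficients appearing in \eqref{N_{T}(X)} with the listed constants $\Sigma_T(\mathcal X)$. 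The first step is therefore to assemble this sum and observe that, because every binomial coefficient occurring is of the form $\binom{k\pm1}{\text{const}}$ with the upper index varying linearly in $k$ and the lower index bounded by $4$ (since $|\operatorname{supp}\pi|\le 5=2n+1$), the total is a polynomial in $k$ of degree at most $4$.

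Next I would set up the "constructed" side. As in \cite{PS3}, each relation among relations arises from the vertex-operator-algebra structure on $N(k\Lambda_0)$: one takes the two vertex operators $r(\rho_1),r(\rho_2)$, uses commutator/associativity formulas in the VOA to rewrite $u(\rho_1\subset\pi)-u(\rho_2\subset\pi)$, and checks that the resulting expression is a combination of strictly higher terms $u(\rho\subset\pi')$ with $\pi\prec\pi'$. The number of independent relations produced this way is again controlled by the combinatorics of the zig-zag lines inside $T$ and is, by the same degree bound, a polynomial in $k$ of degree at most $4$. Since two polynomials of degree $\le 4$ that agree at five distinct points are identical, it suffices to verify the equality of the two counts for five values of $k$. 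For $k=1$ this is the content of \cite{PS1}; for $k=2$ it is \cite{PS3}. Hence the remaining work is the explicit verification for $k=3,4,5$: plug these values into the formulas \eqref{N_{T}(X)} with the constants \eqref{Sigma_T}, compute the "needed" total, independently enumerate the "constructed" relations among relations for each of the finitely many support types in the trapezoid, and confirm the two numbers match in each case.

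Concretely, for each of $k=3,4,5$ the steps are: (i) for every support type $\mathcal X$ listed in Proposition 4.1 that is non-vacuous for that $k$ (the binomial coefficients automatically kill the types with too many rows), evaluate $N_T(\mathcal X)$ from \eqref{N_{T}(X)}; (ii) sum over all types to get the total number of needed relations among relations; (iii) on the representation-theoretic side, for each type $\mathcal X$ and each admissible multiplicity distribution count the relations among relations obtained from the VOA calculus, using that on the trapezoid it is enough to analyse three successive triangles — exactly the reduction emphasised in Section 3; (iv) compare. Once the match holds for these three values, combined with $k=1,2$, the polynomial-identity argument upgrades it to all $k\ge 1$, and the relation \eqref{E:9.2} follows for every $\pi\in\mathcal P^{k+2}(m)$ and every pair of embeddings $\rho_1\subset\pi$, $\rho_2\subset\pi$.

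I expect the main obstacle to be step (iii): carefully enumerating, without double-counting, the relations among relations produced by the VOA structure for the "branching" support types $(B_{s\delta})$, $(C_{\delta s})$, and especially $(D_{s\delta t})$, where two incomparable colored parts $b,c$ create several competing zig-zag embeddings and the higher-order correction terms must be tracked to ensure they genuinely lie in the span of $\{u(\rho\subset\pi'):\pi\prec\pi'\}$. The degree-$4$ polynomiality claim also needs a clean justification — one must argue uniformly that \emph{both} counts are polynomial in $k$ (the "needed" side is immediate from \eqref{N_{T}(X)} and \eqref{Sigma_T}, but the "constructed" side requires knowing that the number of VOA-generated relations per support type depends on $k$ only through the same kind of bounded-index binomial coefficients), since without that the five-point check is not conclusive. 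Everything else — the bookkeeping for $(A_s)$ and the arithmetic of evaluating binomials at $k=3,4,5$ — is routine.
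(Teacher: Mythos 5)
Your overall scaffolding matches the paper's: restrict attention to the trapezoid $T$, show that both the ``needed'' and the ``constructed'' counts are polynomials in $k$ of degree at most $4$, invoke the cases $k=1,2$ from \cite{PS1} and \cite{PS3}, and verify $k=3,4,5$ by direct computation. Your treatment of the ``needed'' side ($N_T(k)$ assembled from (\ref{N_{T}(X)}) and (\ref{Sigma_T}), with the degree bound coming from $|\text{supp\,}\pi|\leq 5$) is exactly what the paper does.

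The genuine gap is on the ``constructed'' side. You propose to ``enumerate the relations among relations obtained from the VOA calculus'' support type by support type, and you yourself concede that you cannot justify that this count is a polynomial in $k$ --- which means your five-point interpolation argument does not close. The paper does something different and much more concrete: by Theorem 7.4 of \cite{PS1} the relations in each degree $m$ form the $\mathfrak g$-module $Q_{k+2}(m)=U(\mathfrak g)q_{(k+1)\theta}(m)\oplus U(\mathfrak g)q_{(k+2)\theta}(m)\oplus U(\mathfrak g)q_{(k+2)\theta-\alpha^{*}}(m)$, Theorem 9.2 of \cite{PS1} reduces the whole theorem to proving equality in the inequality (\ref{E:9.1}), and the representation-theoretic count is then obtained in closed form from the Weyl dimension formula, $\dim Q_{k+2}(m)=4\binom{2k+6}{3}$, a polynomial of degree $3$ in $k$. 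The restriction to the trapezoid is handled by listing the $2k+6$ Young tableaux of shapes $\text{sh\,}\pi$ that admit two embeddings (only $-m=2(k+2)$ carries two tableaux), which turns the right-hand side into the explicit degree-$4$ polynomial $(2k+5)\,\dim Q_{k+2}(m)-2\,\dim L((k+2)\theta)$ of (\ref{star}). Without identifying the ``constructed'' count as this dimension, there is nothing well-defined to compare $N_T(k)$ against, and the verification for $k=3,4,5$ (in the paper, the arithmetic $9108$, $18018$, $32240$ matching on both sides) cannot be carried out. Note also that one only needs the one-sided bound (\ref{E:9.1}) --- needed $\geq$ constructed --- together with the dimension formula; no independent enumeration of VOA-generated relations, free of double-counting, is required, and attempting one as in your step (iii) is both harder than the paper's route and insufficient without the polynomiality justification you flag as missing.
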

\begin{proof}
By using methods in  \cite{PS3} to prove this theorem,  we need to count ''the number of  two-embeddings for $\text{sh\,} \pi=p$''. More precisely for a fixed ordinary partition $p$ of length $k+2$ we need to count 
$$
\sum_{\text{sh\,} \pi=p, \, \pi\in\mathcal P^{k+2}(m)} N(\pi).  
$$
By Theorem 7.4 \cite{PS1} for  $\pi\in\mathcal P^{k+2}(m)$ we have a space of relations for annihilating fields 
\begin{equation}
	Q_{k+2}(m)  =  U(\mathfrak g)q_{(k+1)\theta}(m)\oplus U(\mathfrak g)q_{(k+2)\theta}(m)\oplus U(\mathfrak g)q_{(k+2)\theta-\alpha\sp*}(m)
\end{equation}
where $\alpha\sp*=\alpha_1=\varepsilon_1 -\varepsilon_2$. Then we have
\begin{equation}\label{E:9.1}
	\sum_{\pi\in\mathcal P^{k+2}(m)} N(\pi)\geq\dim Q_{k+2}(m)\ .
\end{equation}
By Theorem 9.2 in \cite{PS1}, in order to verify this theorem  it is enough to prove that in (\ref{E:9.1}) the equality holds.
However it is  much easier to count the number $N(\pi)$ for all $\pi$ with support in a trapezoid $T$ of three successive triangles (see Figure 6), i.e. 
\begin{equation}\label{E:9.1T}
N_T(k)=\sum_{\pi, \, \ell(\pi)=k+2, \, \text{supp\,} \pi\subset T} N(\pi) 
\end{equation}
where $k$ is arbitrary level. It is important to emphasize that the right side of the equation (\ref{E:9.1}) also changed accordingly.\\
Of course, in the case of $n=2$, we have a significantly simplified situation considering that the parameter $|\text{supp}\, \pi|$ is limited by $5$ (see the first statement of Proposition 4.1.). From equations (\ref{N_{T}(X)}) and (\ref{Sigma_T}) we have the following list of all possible  $N_{T}(\mathcal{X})$ for $n=2$ i.e. for $\tilde{\mathfrak{g}}=\tilde{\mathfrak{sp}}_{4}$
$$	\begin{array}{cccc}
		N_{T}(A_5) =  256 \binom{k+1}{4} & N_{T}(A_4) =  648\binom{k+1}{3}& 
		N_{T}(A_3) =  536\binom{k+1}{2}&	N_{T}(A_2) =  145\binom{k+1}{1}\nonumber\\
		&&&\nonumber\\
		N_{T}(B_{4\,|})=  32\binom{k-1}{3}& N_{T}(B_{3\,|}) =  132\binom{k-1}{2}& N_{T}(B_{2\,|}) =  211\binom{k-1}{1}& N_{T}(B_{1\,|}) =  161\binom{k-1}{0}\nonumber\\
		N_{T}(B_{3\,||}) = 32\binom{k-1}{2}& N_{T}(B_{2\,||}) = 124\binom{k-1}{1}& N_{T}(B_{1\,||}) = 182\binom{k-1}{0} &\nonumber\\
		&&&\nonumber\\
		N_{T}(C_{4\,|})=24\binom{k-1}{3}&N_{T}(C_{3\,|})=96\binom{k-1}{2}&N_{T}(C_{2\,|})=147\binom{k-1}{1}&N_{T}(C_{1\,|})=105\binom{k-1}{0}\nonumber\\
		N_{T}(C_{3\,||})=24\binom{k-1}{2}&N_{T}(C_{2\,||}) =90\binom{k-1}{1}&N_{T}(C_{1\,||})=126\binom{k-1}{0}&\nonumber\\
		&&&\nonumber\\
		N_{T}(D_{2|2})=16\binom{k-1}{3}&N_{T}(D_{3|1})=16\binom{k-1}{3}&N_{T}(D_{1|3})= 16\binom{k-1}{3}&	\nonumber\\
		N_{T}(D_{2|1})=62\binom{k-1}{2}&N_{T}(D_{1|2})=62\binom{k-1}{2}&N_{T}(D_{1|1}) = 95\binom{k-1}{1}&	\nonumber\\
		N_{T}(D_{2||1})=16\binom{k-1}{2}&N_{T}(D_{1||2})=16\binom{k-1}{2}&N_{T}(D_{1||1})= 58\binom{k-1}{1} & .\nonumber
	\end{array}
$$
From above list it is obvious  that $N_T(k)$ in (\ref{E:9.1T}) is a polynomial of degree $4$. \\
On the other side, the Weyl dimension formula in the case of symplectic simple Lie algebra $\mathfrak{g}=\mathfrak{sp}_{4}$ gives
\begin{eqnarray}
	\dim L((k+1)\theta) &=& {2k+5\choose 3},\nonumber\\
	\dim L((k+2)\theta) &=& {2k+7\choose 3},\label{E:11.1}\\
	\dim L((k+2)\theta-\alpha^{\star}) &=& \frac{(2k+7)(2k+5)(2k+3)}{3}\nonumber
\end{eqnarray}
From (\ref{E:11.1}) we have
\begin{equation}\label{dim Q_{n=2}}
	\dim Q_{k+2}(m) = 4\binom{2k+6}{3}
\end{equation}
i.e. the $\dim Q_{k+2}(m)$ is a a polynomial of degree $3$. 
For the trapezoid scheme $T$ of three successive triangles all shapes $\text{sh\,}\pi$ that appear with two embeddings of leading terms are listed by following $2k+6$ Young tableauxes 
$$	
\begin{array}{clllcllllc}
		-m=&&&&-m=&-m=&&&&-m=\\
		(k+2)&&&&2(k+2)&2(k+2)&&&&3(k+2)\\
		\sq &\sq\sq&\cdots&\sq\sq&\sq\sq&\sq\sq\sq&\sq\sq\sq&\cdots&\sq\sq\sq&\sq\sq\sq \\
		\sq &\sq &\cdots&\sq\sq&\sq\sq&\sq\sq&\sq\sq&\cdots&\sq\sq\sq&\sq\sq\sq \\
		\vdots&\vdots&\vdots&\vdots&\vdots&\vdots&\vdots&\cdots&\vdots&\vdots\\
		\sq &\sq &\cdots&\sq\sq&\sq\sq&\sq\sq&\sq\sq&\cdots&\sq\sq\sq&\sq\sq\sq \\
		\sq &\sq &\cdots&\sq\sq &\sq\sq&\sq\sq&\sq\sq&\cdots&\sq\sq\sq&\sq\sq\sq \\ 
		\sq &\sq &\cdots&\sq &\sq\sq&\sq&\sq\sq&\cdots&\sq\sq&\sq\sq\sq     
\end{array} \ .   		
$$
\begin{center}
	\text{Figure 8}
\end{center}
Note that only $-m=2(k+2)$ has two corresponding Young tableauxes.
Hence the right hand side of inequality (\ref{E:9.1}) can be replaced by 
\begin{equation}\label{sum P^{k+2(n)}}
	 (2k+5)\times 	\dim Q_{k+2}(m)- 2\times\dim L((k+2)\theta)\ .
\end{equation}
Since $\dim Q_{k+2}(m)$  are  polynomials of degree $3$ (see \ref{dim Q_{n=2}}) it follows  that
the expression (\ref{sum P^{k+2(n)}}) is also a polynomial of maximum degree $4$.
Moreover from (\ref{E:9.1}) it follows that 
\begin{equation}\label{star}
	N_T(k)\geq (2k+5)\times 	\dim Q_{k+2}(m)- 2\times\dim L((k+2)\theta)
\end{equation}	
It is important to point out, if equality holds in (\ref{star}) then equality must also hold in (\ref{E:9.1}).
Since both sides of the inequality (\ref{star}) are polynomials of of maximum degree $4$ it is sufficient to check that the equality holds in (\ref{star})  for  five different values of $k$.

In \cite{PS1} the theorem is proved for basic modules i.e. in the case $k=1$. Moroever in the \cite{PS3}	the above theorem statement is proved for $k=2$. It is important to emphasize that both of the above results are proved for arbitrary $n$, i.e. for every symplectic affine Lie algebra $C_n^{(1)}$.\\
Finally, to finish the proof of theorem it is enough to check equality in (\ref{star})  for the  trapezoid scheme case  where $k=3,4,5$. Respectively, we have the following:
\begin{itemize}
	\item[$k=3$] 
	$$
	\begin{array}{cccc}
	N_{T}(A_5) =  256  & N_{T}(A_4) =  2592& 
	N_{T}(A_3) =  3212&	N_{T}(A_2) =  580\nonumber\\
	&&&\nonumber\\
	N_{T}(B_{3\,|}) =132& N_{T}(B_{2\,|}) =  422 & N_{T}(B_{1\,|}) =  161\nonumber&\\
	N_{T}(B_{3\,||}) = 32& N_{T}(B_{2\,||}) = 248& N_{T}(B_{1\,||}) = 182 &\nonumber\\
	&&&\nonumber\\
	N_{T}(C_{3\,|})=96&N_{T}(C_{2\,|})=294&N_{T}(C_{1\,|})=105&\nonumber\\
	N_{T}(C_{3\,||})=24&N_{T}(C_{2\,||}) =180&N_{T}(C_{1\,||})=126&\nonumber\\
	&&&\nonumber\\
	N_{T}(D_{2|1})=62&N_{T}(D_{1|2})=62&N_{T}(D_{1|1}) = 190&	\nonumber\\
	N_{T}(D_{2||1})=16&N_{T}(D_{1||2})=16&N_{T}(D_{1||1})= 116 & \nonumber
	\end{array}
	$$
	\item[$k=4$]
	$$
	\begin{array}{cccc}
	N_{T}(A_5) =  1280 & N_{T}(A_4) =  6480& 
	N_{T}(A_3) =  5360&	N_{T}(A_2) =  725\nonumber\\
	&&&\nonumber\\
	N_{T}(B_{4\,|})=  32& N_{T}(B_{3\,|}) =  396& N_{T}(B_{2\,|}) =  633& N_{T}(B_{1\,|}) =  161\nonumber\\
	N_{T}(B_{3\,||}) = 96& N_{T}(B_{2\,||}) = 372& N_{T}(B_{1\,||}) = 182 &\nonumber\\
	&&&\nonumber\\
	N_{T}(C_{4\,|})=24&N_{T}(C_{3\,|})=288&N_{T}(C_{2\,|})=441&N_{T}(C_{1\,|})=105\nonumber\\
	N_{T}(C_{3\,||})=72&N_{T}(C_{2\,||}) =270& N_{T}(C_{1\,||})=126&\nonumber\\
	&&&\nonumber\\
	N_{T}(D_{2|2})=16&N_{T}(D_{3|1})=16&N_{T}(D_{1|3})= 16&	\nonumber\\
	N_{T}(D_{2|1})=186&N_{T}(D_{1|2})=186&N_{T}(D_{1|1}) = 285&	\nonumber\\
	N_{T}(D_{2||1})=48&N_{T}(D_{1||2})=48&N_{T}(D_{1||1})= 174 & \nonumber
	\end{array}
	$$
	\item[$k=5$]
	$$
	\begin{array}{cccc}
		N_{T}(A_5) =  3840 & N_{T}(A_4) =  12960& 
		N_{T}(A_3) =  8040&	N_{T}(A_2) =  870\nonumber\\
		&&&\nonumber\\
		N_{T}(B_{4\,|})=  128& N_{T}(B_{3\,|}) =  792& N_{T}(B_{2\,|}) =  844& N_{T}(B_{1\,|}) =  161\nonumber\\
		N_{T}(B_{3\,||}) = 192& N_{T}(B_{2\,||}) = 496& N_{T}(B_{1\,||}) = 182 &\nonumber\\
		&&&\nonumber\\
		N_{T}(C_{4\,|})=96&N_{T}(C_{3\,|})=576&N_{T}(C_{2\,|})=588&N_{T}(C_{1\,|})=105\nonumber\\
		N_{T}(C_{3\,||})=144&N_{T}(C_{2\,||}) =360& N_{T}(C_{1\,||})=126&\nonumber\\
		&&&\nonumber\\
		N_{T}(D_{2|2})=64&N_{T}(D_{3|1})=64&N_{T}(D_{1|3})= 64&	\nonumber\\
		N_{T}(D_{2|1})=372&N_{T}(D_{1|2})=372&N_{T}(D_{1|1}) =380&	\nonumber\\
		N_{T}(D_{2||1})=96&N_{T}(D_{1||2})=96&N_{T}(D_{1||1})= 232 & \nonumber
	\end{array}
	$$ 
\end{itemize} 
Finally, after the summation of all $N_{T}(\mathcal{X})$  for $k=3,4,5$ follows sequentially
\begin{eqnarray}\label{N_{T}(3,4,5)}
	N_{T}(3) & = & 9108\nonumber\\
	N_{T}(4) & = & 18018\\
	N_{T}(5) & = & 32240\nonumber 
\end{eqnarray}

For the right hand side of the inequality (\ref{E:9.1}) from (\ref{dim Q_{n=2}}) we have
\begin{eqnarray}\label{tri sume}
	\dim Q_5(m) &=& \dim L(4\theta) + \dim L(5\theta)  + \dim L(5\theta-\alpha^{\star}) =880\nonumber\\ 
	\dim Q_6(m) &=& \dim L(5\theta) + \dim L(6\theta)  + \dim L(6\theta-\alpha^{\star}) = 1456\\ 
	\dim Q_7(m) &=& \dim L(6\theta) + \dim L(7\theta)  + \dim L(7\theta-\alpha^{\star}) = 2240\nonumber
\end{eqnarray}
%with $\text{supp\,}\pi\subset T$
Furthermore,  all shapes in $T$ which appear with two embeddings of leading terms are listed by Young tableauxes for cases  $k=3,\, 4,\, 5$ like in Figure 8.\\
From   (\ref{E:11.1}), (\ref{sum P^{k+2(n)}}), (\ref{tri sume}) and Figure 8 follows   
\begin{eqnarray}\label{sum P^{k+2(345)}}
	\sum_{m=5}^{15}\sum_{\pi\in\mathcal P^5(m)} N(\pi) &  = &11\times 880- 2\times\binom{13}{10}= 9108\nonumber\\
	\sum_{m=6}^{18}\sum_{\pi\in\mathcal P^6(m)} N(\pi) &  = &13\times 1456- 2\times\binom{15}{12}= 18018\\
	\sum_{m=7}^{21}\sum_{\pi\in\mathcal P^7(m)} N(\pi) &  = &15\times 2240- 2\times\binom{17}{14}= 32240\ .\nonumber
\end{eqnarray}
Now, from (\ref{N_{T}(3,4,5)}) and (\ref{sum P^{k+2(345)}}) it is obvious that equality holds in (\ref{star}) and in  (\ref{E:9.1})  i.e. the theorem is proven.
\end{proof}

%%%%%%%%%%%%%%%%%%%%%

\section*{Acknowledgement}
This work was supported by the project "Implementation of cutting-edge research
and its application as part of the Scientific Center of Excellence for Quantum
and Complex Systems, and Representations of Lie Algebras", PK.1.1.02, European
Union, European Regional Development Fund.\\
Also, this work has been supported in part by Croatian Science Foundation under the project IP-2022-10-9006.

The author of this paper is grateful to Mirko Primc for support and many stimulating discussions.

%%%%%%%%%%%%%%%%%%%%%%%%%%%%%%%%%%%%

\end{document}